\theoremstyle{plain}
\newtheorem{theorem}{Theorem}[section]
\newtheorem{lemma}[theorem]{Lemma}
\theoremstyle{definition}
\newtheorem{definition}[theorem]{Definition}
\theoremstyle{remark}
\newcommand{\R}{\mathbb{R}}
\newcommand{\N}{\mathbb{N}}
\newcommand{\E}{\mathbb{E}}
\newcommand{\K}{\mathcal{K}}
\newcommand{\pol}{^{\textnormal{o}}}
\newcommand{\pcon}[1]{\Pi_{\K}(#1)}
\newcommand{\ppol}[1]{\Pi_{\K^\textnormal{o}}(#1)}
\newcommand{\bproj}[2]{\Pi_{#1}\left(#2\right)}
\newcommand{\proj}[2]{\Pi_{#1}(#2)}
\newcommand{\seq}[1]{\{#1^k\}_{k\in \N}}
\newcommand{\xb}{\bar{x}}
\renewcommand{\Im}{\textnormal{Im}}
\newcommand{\Ker}{\textnormal{Ker}}
\renewcommand{\seq}[1]{\{#1^k\}_{k\in \N}}
\def\CC{\mbox{$\cal{C}$}}
\def\KC{\mbox{$\cal{K}$}}
\def\SC{\mbox{$\cal{S}$}}
\def\NC{\mbox{$\cal{N}$}}
\def\OC{\mbox{$\cal{O}$}}
\begin{document}


\title{Exploiting cone approximations in an augmented Lagrangian method for conic optimization}

\author{
\name{Mituhiro Fukuda\textsuperscript{a,b,c,d}\thanks{CONTACT M.~F. Email: mituhiro@usp.br}, Walter G\'omez\textsuperscript{e}, Gabriel Haeser\textsuperscript{f}, and Leonardo M. Mito\textsuperscript{f}}
\affil{\textsuperscript{a}School of Arts, Sciences and Humanities, University of S\~ao Paulo, S\~ao Paulo, SP, Brazil; \textsuperscript{b}Center for Mathematics, Computing and Cognition, Federal University of ABC, Santo Andr\'e, SP, Brazil; \textsuperscript{c}Department of Computer Science, University of S\~ao Paulo, S\~ao Paulo, SP, Brazil; \textsuperscript{d}Department of Mathematical and Computing Science, Institute of Science Tokyo, Meguro-ku, Tokyo, Japan; \textsuperscript{e}Department of Mathematical Engineering, Universidad de la Frontera, Temuco, Chile; \textsuperscript{f}Department of Applied Mathematics, University of S\~ao Paulo, S\~ao Paulo, SP, Brazil.}
}
\date{May 27, 2024, revised December 1, 2024, and April 18, 2025}

\maketitle

\begin{abstract}
We propose an algorithm for general nonlinear conic programming which does not require the knowledge of the full cone, but rather a simpler, more tractable, approximation of it. We prove that the algorithm satisfies a strong global convergence property in the sense that it generates a strong sequential optimality condition. In particular, a KKT point is necessarily found when a limit point satisfies Robinson's condition. We conduct numerical experiments minimizing nonlinear functions subject to a copositive cone constraint. In order to do this, we consider a well known polyhedral approximation of this cone by means of refining the polyhedral constraints after each augmented Lagrangian iteration. We show that our strategy outperforms the standard approach of considering a close polyhedral approximation of the full copositive cone in every iteration.
\end{abstract}

\begin{keywords}
Nonlinear conic programming; sequential optimality condition; augmented Lagrangian method; nonlinear copositive programming
\end{keywords}

\begin{amscode}
90C30; 49M37; 90C26
\end{amscode}

\section{Introduction}

We study a very general class of optimization problems, sometimes referred to as \textit{nonlinear conic programming} (NCP), which can be stated in the form:
\begin{equation}\label{prob:ncp}
	\begin{aligned}
	& \underset{x\in \R^n}{\text{Minimize}}
	& & f(x), \\
	& \text{subject to}
	& & g(x)\in \K,\\
	\end{aligned}
	\tag{NCP}
\end{equation}
where $f\colon\R^n\to\R$ and $g\colon\R^n\to \E$ are continuously differentiable, $\E$ is a finite-dimensional vector space equipped with an inner product $\langle\cdot,\cdot\rangle$, and $\K\subset \E$ is a nonempty closed convex cone. We will denote the feasible set of~(NCP) by $\Omega$, which is assumed to be nonempty, and the Lagrangian function of (NCP) is defined as $L(x,\mu)=f(x)+\langle g(x),\mu\rangle,\textrm{ for } \mu\in \E$.

Despite its generality and simplicity of representation, studies on practical methods for solving~(NCP) and even some of its particular cases are somewhat rare in the current literature, in comparison with the cases when $\K$ is polyhedral, i.e., \textit{nonlinear programming} (NLP); or when $g$ is affine, i.e., \textit{conic programming} (CP). This is no surprise, in view of the fact that evaluating feasibility in (N)CP may be computationally expensive or even NP-Hard, such as in the particular case of \textit{copositive programming} (COP). For such intricate problems, some authors have found sequences of polyhedral approximations of $\K$ and used them to compute bounds for the solutions of the original problems -- see, for instance, \cite{duropen,dursurvey,dursurvey2} and references therein for additional information on COP. A particularly interesting algorithmic approach for solving COP problems was presented in Bundfuss and D{\"u}r~\cite{duradapt} and also in Y{\i}ld{\i}r{\i}m~\cite{alper}, which consists of computing increasingly better polyhedral approximations of $\K$ and solving the approximate problems induced by them to generate a sequence of approximate solutions for the original problem. Both works~\cite{duradapt,alper} present impressive numerical results, using distinct approximations of $\K$.

Recently, Andreani et al.~\cite{Andreani2020} extended the so-called ``sequential optimality conditions'' from NLP to the NCP world. In summary, sequential optimality conditions are parametric perturbed forms of the traditional Karush-Kuhn-Tucker (KKT) conditions with three distinguishing properties:
\begin{enumerate}
\item They are always fulfilled at local minimizers, even for degenerate problems;
\item They are equivalent to the standard KKT condition in the presence of a constraint qualification;
\item They are naturally satisfied at all the feasible accumulation points computed by a practical algorithm such as \cite{amhaeser, ahv, amsvaiter}.
\end{enumerate}

The interested reader may check the earlier texts on sequential conditions in NLP~\cite{amhaeser,amsvaiter} and an extension to \textit{nonlinear semidefinite programming} (NSDP)~\cite{ahv} for more details. All these works use an augmented Lagrangian method to illustrate item 3 of the list above, but one of the most critical disadvantages of it in the conic context is the need of a projection onto $\K$ that must be computed several times per iteration and that may be numerically very expensive. In this paper, we will study the effects of replacing $\K$ with increasingly better polyhedral approximations of it, and the novelty in our proposal is the possibility of improving 
approximation mid-execution. As far as we are concerned, there is no previous work that allows this in the literature. Our first concern is to build a solid convergence theory for our method, which is done by means of a modified variant of a sequential optimality condition called \textit{approximate gradient projection} (AGP), first introduced in~\cite{mariosvaiter} for NLP and then extended to the conic framework in~\cite{Andreani2020}.

To illustrate the behavior of our method, we use it for solving a nonlinear COP problem via Y{\i}ld{\i}r{\i}m's approximations~\cite{alper}.

Since this is a first approach, there are some limitations. We can only prove that feasible
limit points will satisfy the necessary optimality condition under Robinson's constraint qualification. Therefore,
the algorithm can generate points which might not get close to optimal if the Lagrange multipliers are not bounded.
Also a careful polyhedral approximation of the closed convex cone $\K$ is necessary to guarantee cheap computation without turning
the subproblems too complex to solve.

There are many applications of the (NCP) formulation for tractable cones, for instance in case of nonlinear semidefinite programming \cite{birgin20,Stingl2009}.
The copositive cone in contrast is an instance of an intractable cone.
As shown for example in \cite{duropen, duradapt, dursurvey}, NP-hard binary quadratic
programming problems can be equivalently reformulated as linear programs
over the copositive cone. The investigation of an equivalent reformulation
of nonlinear binary optimization problems as nonlinear copositve
programs would be an interesting extension. To our knowledge, this
has not been considered yet, possibly due to the lack of algorithms to deal with it. We believe that with our contribution it will be possible to model difficult problems  with nonconvex (NCP) and to find approximate solutions using the proposed algorithm.


\subsection{Notations}

We will adopt the following notations.

\begin{itemize}
\item The natural number set is defined as $\N =\{0,1,\ldots,\}$;
\item $\R_{++}$ denotes the set of real positive numbers;
\item $\|z\|$ is the norm defined by the inner product in $\E\ni z$, if it is not otherwise specified;
\item $\|y\|_2$, $\|y\|_{\infty}$, and $\|y\|$ are the Euclidean, infinity, and any norm in $\R^n\ni y$, respectively;
\item $\|X\|_F$ is the Frobenius norm for an $m\times m$ real symmetric matrix $X$;
\item $B[\bar{x},\delta]$ is the closed ball centered at $\bar{x}$ with radius $\delta\geq 0$;
\item $\proj{\K}{z}$ denotes the orthogonal projection of $z$ onto $\K$ by the inner product $\langle\cdot,\cdot\rangle$ in $\E$;
\item $\K\pol$ and $\K^*$ stand for the polar cone and dual cone of a nonempty closed convex cone $\K\subseteq \E$, respectively;
\item $Dg(x)^*$ stands for the Jacobian of $g\colon\R^n\rightarrow \E$;
\item $g^{-1}\colon\E\rightarrow\R^n$ denotes the inverse image of $g\colon\R^n\rightarrow \E$.
\end{itemize}

\section{General framework}

We study situations where the cone $\K$ can be approximated by a sequence $\seq{\K}$ that converges to $\K$ in some sense, such that projecting onto each $\K^k$ is relatively easy.

\begin{definition}[Continuous approximation of $\K$]\label{def:goodapprox}
A sequence $\seq{\K}$ of nonempty closed convex cones is a continuous approximation for $\K$ when:
\begin{itemize}
\item For every sequence $\seq{y}\to y$ with $y^k\in \K^k$ for all $k\in \N$, we must have $y\in \K$. In other words, $\limsup_{k\in \N}\K^k\subseteq \K$.
\item For every $y\in \K$, there exists a sequence $\seq{y}\to y$ with $y^k\in \K^k$, for all $k\in \N$. In other words, $\K\subseteq \liminf_{k\in \N}\K^k$.
\end{itemize}
\end{definition}

When Definition \ref{def:goodapprox} holds, we denote it by $\lim_{k\in \N}\K^k= \K$. Let us begin with a technical lemma regarding continuous cone approximations, which is simply a generalization of some classical results on projections:

\begin{lemma}\label{lem:projseq}
Let $\K$ and $\seq{\K}$ be nonempty closed convex cones in $\E$. Then:
\begin{enumerate}
\item If $\K\subseteq \liminf_{k\in \N}\K^k$, then $\proj{\K^k}{z}\to z$ for every $z\in \K$, where
$\proj{\K^k}{z}$ is the orthogonal projection of $z$ onto $\K^k$;

\item If $\lim_{k\in \N}\K^k= \K$, then $\proj{\K^k}{z^k}\to \pcon{z}$ for every converging sequence $\seq{z}\to z\in \E$.
\end{enumerate}
\end{lemma}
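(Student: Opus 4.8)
The plan is to treat the two parts separately: item~1 follows directly from the inner-limit hypothesis, while item~2 rests on a boundedness-plus-subsequence argument anchored on the variational characterization of the metric projection. For item~1, fix $z\in\K$. Since $\K\subseteq\liminf_{k\in\N}\K^k$, there is a sequence $\seq{y}\to z$ with $y^k\in\K^k$ for every $k$. As $\proj{\K^k}{z}$ is by definition the nearest point of $\K^k$ to $z$, the competitor $y^k\in\K^k$ gives $\|z-\proj{\K^k}{z}\|\le\|z-y^k\|$; letting $k\to\infty$ makes the right-hand side vanish, so $\proj{\K^k}{z}\to z$. This part is essentially immediate.

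For item~2, write $p^k:=\proj{\K^k}{z^k}$ and $p:=\pcon{z}$. I would first observe that $\{p^k\}$ is bounded: each $\K^k$ is a cone and so contains $0$, whence $\|z^k-p^k\|\le\|z^k-0\|=\|z^k\|$, and since $z^k\to z$ the estimate $\|p^k\|\le\|p^k-z^k\|+\|z^k\|\le 2\|z^k\|$ shows boundedness. In finite dimensions it then suffices to prove that every convergent subsequence of $\{p^k\}$ has limit $p$, because a bounded sequence with a unique cluster point converges to it.

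So let $p^{k_j}\to\bar p$ be any convergent subsequence. I must verify the two conditions characterizing $\bar p=\pcon{z}$, namely $\bar p\in\K$ and $\langle z-\bar p,\,w-\bar p\rangle\le 0$ for all $w\in\K$. The membership $\bar p\in\K$ comes from the outer-limit hypothesis $\limsup_{k\in\N}\K^k\subseteq\K$ applied to the feasible points $p^{k_j}\in\K^{k_j}$. For the inequality, fix $w\in\K$ and use the inner-limit hypothesis to select $w^k\in\K^k$ with $w^k\to w$; the variational inequality defining the projection onto the convex set $\K^{k_j}$ reads $\langle z^{k_j}-p^{k_j},\,w^{k_j}-p^{k_j}\rangle\le 0$, and letting $j\to\infty$, with $z^{k_j}\to z$, $p^{k_j}\to\bar p$ and $w^{k_j}\to w$, yields $\langle z-\bar p,\,w-\bar p\rangle\le 0$. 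Together these are exactly the characterization of the metric projection, so $\bar p=p$.

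The delicate point is the membership step $\bar p\in\K$: it must invoke the outer limit in the Painlev\'e--Kuratowski sense that permits passing to subsequences, which is how the symbol $\limsup_{k\in\N}\K^k$ is to be read here. A merely full-sequence reading would be too weak --- alternating $\K^k$ between $\{0\}$ and a fixed line satisfies the full-sequence form of both hypotheses yet makes the projections oscillate --- so this is where the hypotheses genuinely do their work. The second, milder subtlety is synchronizing the moving test points $w^{k_j}\in\K^{k_j}$ with the fixed $w\in\K$ so that the projection inequality survives the passage to the limit, and this is precisely what the inner-limit hypothesis supplies.
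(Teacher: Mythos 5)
Your proof is correct, and while item~1 coincides with the paper's argument, your item~2 takes a genuinely different route. The paper proceeds in two steps: it first proves the \emph{fixed-point} case $\proj{\K^k}{z}\to\pcon{z}$ for $z\in\E$, using boundedness (nonexpansiveness plus $0\in\K^k$), extraction of a cluster point $w\in\K$ via the outer limit, and then the distance comparison $\|z-w\|=\lim_{k\in N}\|z-\proj{\K^k}{z}\|\le\lim_{k\in N}\|z-\proj{\K^k}{y}\|=\|z-y\|$ for each $y\in\K$, where item~1 applied to $y$ makes $\proj{\K^k}{y}$ an admissible competitor converging to $y$; this forces $w=\pcon{z}$. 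Only afterwards does it pass to a moving sequence, via nonexpansiveness: $\|\proj{\K^k}{z^k}-\pcon{z}\|\le\|z^k-z\|+\|\proj{\K^k}{z}-\pcon{z}\|\to 0$. You instead handle $z^k$ in a single pass, identifying each cluster point $\bar p$ through the variational characterization of the projection ($\bar p\in\K$ together with $\langle z-\bar p,\,w-\bar p\rangle\le 0$ for all $w\in\K$), with the inner limit supplying the moving test points $w^{k_j}\in\K^{k_j}\to w$. Both proofs share the boundedness-plus-unique-cluster-point skeleton; the paper's version stays entirely at the level of distance estimates and reuses item~1 verbatim, which makes each step a one-line inequality, whereas yours avoids the two-step reduction and is the argument one would write for Painlev\'e--Kuratowski (or Mosco) convergence of general closed convex sets, since nothing in it beyond the boundedness estimate uses the cone structure.

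Your closing observation about the reading of the hypotheses is well taken, and it applies equally to the paper's own proof: there too a cluster point is extracted along an infinite subset $N\subseteq\N$ and the outer condition is invoked for that subsequence, which the literal full-sequence wording of Definition~2.1 does not justify. Your alternating example ($\K^k$ equal to $\{0\}$ for odd $k$ and to a fixed line for even $k$, with $\K=\{0\}$) indeed satisfies both bullets of the definition as verbally stated while the conclusion of item~2 fails, so the symbol $\limsup_{k\in\N}\K^k\subseteq\K$ must be read in the Painlev\'e--Kuratowski sense, i.e., as a statement about cluster points of selections along subsequences. Making this explicit is a genuine clarification rather than a defect of your proof.
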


\begin{proof}

\

\begin{enumerate}

\item Let $z\in \K$. By the hypothesis, there exists a sequence $\seq{z}\to z$ such that $z^k\in \K^k$ for each $k\in \N$. From the definition of projection, $\|\proj{\K^k}{z}-z\|\leqslant \|z^k-z\|$, whence follows the result;

\item Since the projection is nonexpansive and $0\in\K^k$, $\{\proj{\K^k}{z}\}_{k\in \N}$ is bounded by $\|z\|$. Let $N\subseteq \N$ be any infinite subset of $\N$ such that $\{\proj{\K^k}{z}\}_{k\in N}$ converges to, say, $w\in\E$. Since $\limsup_{k\in \N}\K^k\subseteq \K$ we obtain $w\in\K$. Now, for every $y\in \K$, it follows from the previous item that 
$$\|z-w\|
=
\lim_{k\in N} \|z-\proj{\K^k}{z}\|
\leqslant 
\lim_{k\in N} \|z-\proj{\K^k}{y}\|
=
\|z-y\|,$$ 
which means $w=\pcon{z}$, hence $\lim_{k\in\N}\proj{\K^k}{z}=\pcon{z}$. Now,
\begin{equation*}
\begin{array}{ll}
\|\proj{\K^k}{z^k} -\pcon{z}\| & \leqslant \|\proj{\K^k}{z^k}-\proj{\K^k}{z}\| + \|\proj{\K^k}{z}-\pcon{z}\|\\
              & \leqslant \|z^k-z\|+\|\proj{\K^k}{z}-\pcon{z}\|\to0,
\end{array}
\end{equation*}
which completes the proof.
\end{enumerate}
\end{proof}

To study global convergence of algorithms that benefit from continuous approximations of $\K$, we propose an adapted version of the sequential optimality condition from~\cite{Andreani2020}, called \emph{approximate gradient projection} (AGP):

\begin{definition}[R-AGP]\label{def:ragp}
Let $\seq{\K}$ be a continuous approximation of $\K$, in the sense of Definition \ref{def:goodapprox}. We say a feasible point $\bar{x}$ of (NCP) satisfies the \textit{relaxed cone AGP} (R-AGP) condition if there exist sequences $\seq{x}\rightarrow \bar{x}$ and $\seq{\mu}$ such that $\mu^k\in(\mathcal{K}^k)\pol$ for every $k\in \N$ and
\begin{enumerate}
\item $\nabla L(x^k,\mu^k):=\nabla f(x^k)+Dg(x^k)^{*}[\mu^k]\to 0,$
\item $\langle \mu^{k}, \proj{\K^k}{g(x^{k})} \rangle \rightarrow 0$.
\end{enumerate}
\end{definition}

The reason of requiring $\mu^k$ to be in the polar cone $(\K^k)^{\pol}$ instead of in the
dual cone of $\K^k$ will be clear in the proof of Theorem~\ref{thm:ragpnec}.
Recall that the KKT condition for (NCP) at $(\bar{x},\bar{\mu})\in \R^n\times \K\pol$ is
\begin{enumerate}
\item $\nabla L(\bar{x},\bar{\mu}) = \nabla f(\bar{x})+ Dg(\bar{x})^*[\bar{\mu}] = 0$;
\item $\langle \bar{\mu}, \proj{\K}{g(\bar{x})}\rangle = 0$;
\item $\proj{\K}{g(\bar{x})}=g(\bar{x})$.
\end{enumerate}

Next, we prove that R-AGP is a necessary optimality condition. For this purpose, we concisely prove the convergence of a variation of the external penalty algorithm. This algorithm, even though it seems novel, is not highlighted here because R-AGP will be used later for building the convergence theory of an augmented Lagrangian algorithm, which is where we intend to focus. For doing so, besides continuity of the cone approximation, we assume also that locally the original feasible set $g^{-1}(\K)$ is asymptotically included in the approximate feasible set $g^{-1}(\K^k)$. The proof employs an auxiliary problem which is an unconstrained version of the problem with two additional terms: a penalizing one to enforce feasibility and another regularizing one as in \cite{Andreani2020, amhaeser, ahv, amsvaiter}.

\begin{theorem}\label{thm:ragpnec}
If $\bar{x}$ is a local minimizer of (NCP), $\seq{\K}$ is a continuous approximation of $\K$, and there is a $\delta>0$ such that $g^{-1}(\K)\cap B[\bar{x},\delta]\subseteq \liminf_{k\in \N} g^{-1}(\K^k)\cap B[\bar{x},\delta]$, then $\xb$ satisfies R-AGP.
\end{theorem}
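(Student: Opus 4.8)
The plan is to prove R-AGP by constructing a suitable penalized problem whose minimizers yield the required sequences $\seq{x}$ and $\seq{\mu}$. Since $\bar{x}$ is a local minimizer of (NCP), there is a ball $B[\bar{x},\delta]$ (shrinking $\delta$ if necessary so that the local-minimality ball and the inclusion hypothesis coincide) on which no feasible point beats $f(\bar{x})$. I would define, for each $k$, the external-penalty objective
\begin{equation*}
\Phi_k(x):=f(x)+\frac{k}{2}\,\|g(x)-\proj{\K^k}{g(x)}\|^2+\frac{1}{2}\|x-\bar{x}\|^2,
\end{equation*}
where the last term is a standard Tikhonov-type regularization that forces the penalty minimizers toward $\bar{x}$. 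Let $x^k$ be a global minimizer of $\Phi_k$ over the compact set $B[\bar{x},\delta]$, which exists by continuity and compactness.

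The first key step is to show $x^k\to\bar{x}$. Here the hypothesis $g^{-1}(\K)\cap B[\bar{x},\delta]\subseteq\liminf_{k\in\N}g^{-1}(\K^k)\cap B[\bar{x},\delta]$ enters: because $\bar{x}\in g^{-1}(\K)\cap B[\bar{x},\delta]$, there is a sequence $\seq{z}\to\bar{x}$ with $z^k\in g^{-1}(\K^k)\cap B[\bar{x},\delta]$, so the penalty term vanishes at $z^k$ and $\Phi_k(z^k)=f(z^k)+\tfrac12\|z^k-\bar{x}\|^2\to f(\bar{x})$. Comparing $\Phi_k(x^k)\leqslant\Phi_k(z^k)$ gives a uniform upper bound, which forces the penalty term $\tfrac{k}{2}\|g(x^k)-\proj{\K^k}{g(x^k)}\|^2$ to stay bounded, hence $\|g(x^k)-\proj{\K^k}{g(x^k)}\|\to0$. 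Taking any convergent subsequence $x^k\to x^*$ (possible by compactness), the vanishing penalty together with $\limsup_k\K^k\subseteq\K$ and Lemma~\ref{lem:projseq} item 2 shows $g(x^*)\in\K$, so $x^*$ is feasible; then the inequality $f(x^k)+\tfrac12\|x^k-\bar{x}\|^2\leqslant f(\bar{x})$ passes to the limit to give $f(x^*)+\tfrac12\|x^*-\bar{x}\|^2\leqslant f(\bar{x})$, and since $x^*$ is feasible in $B[\bar{x},\delta]$ we get $f(x^*)\geqslant f(\bar{x})$ by local minimality, forcing $\|x^*-\bar{x}\|=0$, i.e. $x^*=\bar{x}$. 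As the limit is unique, the whole sequence converges.

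The second key step is to read off the multipliers from the optimality condition for $\Phi_k$. Since $x^k\to\bar{x}$ lies eventually in the interior of $B[\bar{x},\delta]$, it is an unconstrained local minimizer of $\Phi_k$, so $\nabla\Phi_k(x^k)=0$. Using the standard fact that the gradient of $\tfrac12\|y-\proj{\K^k}{y}\|^2$ is $y-\proj{\K^k}{y}$, I would set
\begin{equation*}
\mu^k:=k\bigl(g(x^k)-\proj{\K^k}{g(x^k)}\bigr),
\end{equation*}
and the stationarity equation becomes $\nabla f(x^k)+Dg(x^k)^*[\mu^k]+(x^k-\bar{x})=0$, so $\nabla L(x^k,\mu^k)=-(x^k-\bar{x})\to0$, giving item 1 of R-AGP. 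That $\mu^k\in(\K^k)\pol$ follows from the obtuse-angle characterization of projection onto the convex cone $\K^k$, namely $g(x^k)-\proj{\K^k}{g(x^k)}\in(\K^k)\pol$. For item 2, the same characterization gives $\langle g(x^k)-\proj{\K^k}{g(x^k)},\proj{\K^k}{g(x^k)}\rangle=0$, hence $\langle\mu^k,\proj{\K^k}{g(x^k)}\rangle=0$ identically, which trivially tends to $0$.

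The main obstacle I anticipate is the convergence argument $x^k\to\bar{x}$: one must carefully combine the three hypotheses (local minimality, the liminf inclusion of feasible sets, and the continuity of the cone approximation via Lemma~\ref{lem:projseq}) to simultaneously guarantee that the comparison sequence $\seq{z}$ exists, that limit points are feasible, and that local minimality pins the limit down to $\bar{x}$ rather than some other feasible point with the same objective value; the Tikhonov regularizer $\tfrac12\|x-\bar{x}\|^2$ is precisely the device that resolves this ambiguity and should be flagged as the essential technical ingredient.
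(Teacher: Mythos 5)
Your proposal is correct and takes essentially the same route as the paper's proof: the same Tikhonov-regularized external-penalty subproblem over $B[\bar{x},\delta]$ (your penalty $\|g(x)-\proj{\K^k}{g(x)}\|^2$ equals the paper's $\|\proj{(\K^k)\pol}{g(x)}\|^2$ by the Moreau decomposition), the same identification of the limit of the penalty minimizers with $\bar{x}$, and the same multipliers $\mu^k$ read off from unconstrained stationarity once the iterates enter the interior of the ball. The only cosmetic differences are that you compare against a $\K^k$-feasible sequence $z^k\to\bar{x}$ (so the penalty vanishes at the comparison points) and invoke local minimality directly to pin down the limit, whereas the paper compares against $\bar{x}$ itself via Lemma~\ref{lem:projseq} item 1 and against all local feasible points to identify the limit as the unique minimizer of \eqref{prob:locncp}; both rest on the same ingredients.
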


\begin{proof}
Let us assume that $\delta>0$ is small enough such that $f(\bar{x})\leq f(x)$ for all $x\in g^{-1}(\K)\cap B[\bar{x},\delta]$ and let $\{\rho_k\}_{k\in \N}\to\infty$. Consider the auxiliary problem of (NCP):
\begin{equation}\label{prob:penalty}
\begin{array}{ll}
\underset{x\in \R^n}{\text{Minimize}} & P_k(x)\doteq f(x)+\frac{\rho_k}{2}\|\proj{(\K^k)\pol}{g(x)}\|^2+\frac{1}{2}\|x-\bar{x}\|^2\\
\text{subject to} & \|x-\bar{x}\|\leqslant \delta.
\end{array}
\tag{RegA-k}
\end{equation}
Let $\seq{x}$ be a sequence of global minimizers of \eqref{prob:penalty}, which is bounded by the ball centered at $\xb$ with radius $\delta$, and let $w$ be an arbitrary limit point of it. Then, by the optimality of $x^k$, we have $P_k(x^k)\leqslant P_k(\xb)$ for every $k\in\N$, which implies
\begin{equation}\label{eqn:eq0}
\frac{\rho_k}{2}\|\proj{(\K^k)\pol}{g(\bar{x})}\|^2\geqslant f(x^k)-f(\bar{x})+\frac{\rho_k}{2}\|\proj{(\K^k)\pol}{g(x^k)}\|^2+\frac{1}{2}\|x^k-\bar{x}\|^2.
\end{equation}
Dividing everything by $\frac{\rho_k}{2}$, we obtain
\begin{equation}\label{eqn:eq1}
\|\proj{(\K^k)\pol}{g(\bar{x})}\|^2\geqslant \frac{2(f(x^k)-f(\bar{x}))}{\rho_k}+\|\proj{(\K^k)\pol}{g(x^k)}\|^2+\frac{\|x^k-\bar{x}\|^2}{\rho_k}\geqslant \frac{2(f(x^k)-f(\bar{x}))}{\rho_k},
\end{equation}
for every $k\in \N$. Since $\proj{(\K^k)\pol}{g(\bar{x})}=g(\bar{x})-\proj{\K^k}{g(\bar{x})}\to0$ (Lemma \ref{lem:projseq} item 1) and $\seq{x}$ is bounded, and $\rho_k\to\infty$, we obtain 

\begin{equation}\label{eqn:eq3}
\lim_{k\to\infty}\|\proj{(\K^k)\pol}{g(x^k)}\|=0,
\end{equation}
which means $g(w)\in \K$ because $$\|\ppol{g(x^k)}\|\leqslant \|g(x^k)-\proj{\K^k}{g(x^k)}\|+\|\proj{\K^k}{g(x^k)}-\pcon{g(x^k)}\|,$$ given that both terms vanish due to \eqref{eqn:eq3} and Lemma \ref{lem:projseq} item 2, respectively.

For every $z\in g^{-1}(\K)\cap B[\bar{x},\delta]$, there is a sequence $\seq{z}\to z$ such that $z^k\in g^{-1}(\K^k)\cap B[\bar{x},\delta]$, so $$f(x^k)+(1/2)\|x^k-\bar{x}\|^2\leqslant P_k(x^k)\leqslant P_k(z^k)=f(z^k)+(1/2)\|z^k-\bar{x}\|^2.$$ Taking limits, we obtain $f(w)+(1/2)\|w-\bar{x}\|^2\leqslant f(z)+(1/2)\|z-\bar{x}\|^2$. Hence, $w$ is a global minimizer of the following localized problem:
\begin{equation}\label{prob:locncp}
\begin{array}{ll}
\underset{x\in \R^n}{\text{Minimize}} & f(x)+\frac{1}{2}\|x-\bar{x}\|^2\\
\text{subject to} & g(x)\in \K\\
&\|x-\bar{x}\|\leqslant \delta,
\end{array}
\tag{Loc-NCP}
\end{equation}
but the unique global minimizer of \eqref{prob:locncp} is $\bar{x}$, which means $w=\bar{x}$. For $k$ sufficiently large, we have $\|x^k-\bar{x}\|<\delta$ and, by the first-order conditions for \eqref{prob:penalty}, we obtain

\begin{equation}\label{eqn:eq4}
\nabla f(x^k)+Dg(x^k)^*[\rho_k\proj{(\K^k)\pol}{g(x^k)}]=-(x^k-\bar{x}).
\end{equation}
Defining $\mu^k\doteq \rho_k\proj{(\K^k)\pol}{g(x^k)}$ for every $k\in \N$ is enough to finish the proof taking into account the equality
$$\langle \mu^k, \proj{\K^k}{g(x^k)}\rangle = \rho_k\langle \proj{(\K^k)\pol}{g(x^k)}, \proj{\K^k}{g(x^k)}\rangle=0,$$ and \eqref{eqn:eq4} with $x^k\to \bar{x}$.

\end{proof}

It is not true that the continuity of $g$ together with the fact $\seq{\K}$ is a continuous approximation of $\K$ implies that $g^{-1}(\K)\subseteq \liminf g^{-1}(\K^k)$, even if all sets involved are closed convex cones. For example, take
\[
	g(x)\doteq \begin{pmatrix}
		x+1 \\ x^2
	\end{pmatrix}
	\quad \textnormal{and} \quad
	\K\doteq \textnormal{cone}((1,0))
	\quad \textnormal{and} \quad
	\K^k\doteq \textnormal{cone}((1,-1/k)), \ \forall k\in \N.
\]
In this case, we have $\K^k\to \K$, but $g^{-1}(\K)=\{(1,0)\}$ and $g^{-1}(\K^k)=\emptyset$ for every $k\in \N$. 

In order to ensure the validity of this inclusion, we will consider from now on that $\seq{\K}$ is an outer approximation of $\K$; that is, if $\K\subseteq \K^k$ for every $k\in\N$, then of course $g^{-1}(\K)\subseteq g^{-1}(\K^k)$ for every $k\in\N$, whence follows that $g^{-1}(\K)\subseteq \liminf_{k\in \N} g^{-1}(\K^k)$.
In Section~\ref{sec:approximation}, we will define an outer approximation $\K^k$ for the cone of copositive matrices $\K$ based on \cite{alper} to be used in the numerical experiments of Section~\ref{sec:numerical}. Since $\K\subseteq \K^k$, the hypothesis of local feasibility of Theorem~\ref{thm:ragpnec} is automatically satisfied.

\subsection{An augmented Lagrangian variant with projections onto $\K^k$}

Given a positive scalar sequence $\{\rho_k\}_{k\in \N}$, a continuous approximation $\seq{\K}$ of $\K$, a closed ball $\mathcal{B}\subseteq \mathbb{E}$ centered at the origin with radius $R$ such that $\mathcal{B}\cap(\K^k)\pol\neq\emptyset$ for every $k\in \N$ together with a sequence $\{\widehat{\mu}^k\}_{k\in\N}\subseteq \mathcal{B}$, let $L_{\rho_k,\widehat\mu^k}: \mathbb{R}^{n}\rightarrow \mathbb{R}$ be an \textit{augmented Lagrangian function with a cone approximation}, defined as 
    \begin{equation}
    L_{\rho_k,\widehat\mu^k}(x)\doteq f(x)+
    \frac{\rho_k}{2}
    \left\|\bproj{(\K^k)\pol}{g(x)+\frac{\widehat\mu^k}{\rho_k}}\right\|^{2}-\frac{1}{2\rho_k}\left\|\widehat\mu^k\right\|^{2}.
    \label{eqn:sal}
    \end{equation}



Let us now consider 
Algorithm~\ref{alg:gframework}.

\begin{center}
	\begin{algorithm}[!h]
		\caption{General framework: Augmented Lagrangian}
		\label{alg:gframework}	
		
		\
		
		\textit{Inputs:} a sequence $\{\varepsilon_{k}\}_{k\in \N} \subseteq \mathbb{R}_{++}$ of scalars such that $\varepsilon_{k} \rightarrow 0$; real parameters $\tau>1$, $\sigma \in (0,1)$, $\rho_0>0$, and $R>0$; $v^{-1}\in \E$; and initial points $(x^{-1}, \widehat{\mu}^{0}) \in \mathbb{R}^{n}\times \mathcal{B}\cap (\K^0)\pol$. 
		
	For every $k\in \N$:
     \begin{enumerate}		
		\item Compute 
		some point $x^{k}$ such that 
		\begin{equation}
		\|  \nabla L_{\rho_k,\widehat\mu^k}(x^k) \|\leq \varepsilon_{k},\label{eqn:approxfeasibility}         
		\end{equation}
		using an iterative method starting from $x^{k-1}$;
		\item Update the multiplier
		  \begin{equation}\label{eqn:almult}
		    \mu^{k}\doteq \rho_{k}
		    \bproj{(\K^k)\pol}{g(x^k)+\frac{\widehat{\mu}^k}{\rho_k}}, 
          \end{equation}
          and define $\widehat{\mu}^{k+1}\doteq \frac{\min\{\|\mu^k\|,R\}}{\|\mu^k\|}\mu^k$ as the projection of $\mu^{k}$ onto $\mathcal{B}\cap (\K^k)\pol$, where $R$ is the radius of the ball $\mathcal{B}$;
          \item Define 
		\begin{equation}\label{eqn:alfeascompmeasure}
		v^{k}\doteq\frac{\widehat{\mu}^{k}}{\rho_{k}}-\bproj{(\K^k)\pol}{g(x^k)+\frac{\widehat{\mu}^k}{\rho_k}};       
		\end{equation}
		   
		 \item If $\|v^{k}\|\leq\sigma \|v^{k-1}\|$, 
			set $\rho_{k+1}\doteq\rho_{k}$. Otherwise, 
			choose some $\rho_{k+1}\geq \tau \rho_{k}$.  
		
		\
		    
     \end{enumerate}	
	\end{algorithm}
\end{center}

Note that Steps 3 and 4 imply that either $\rho_k\rightarrow \infty$ or there is some $k_0\in \N$ such that $\rho_k=\rho_{k_0}$ for every $k>k_0$ and $v^k\to 0$. With this in mind, we proceed by showing that Algorithm \ref{alg:gframework} generates sequences whose limit points satisfy R-AGP.

  
\begin{theorem}\label{theo:fakkt}
Let $\bar{x}$ be a feasible limit point of a sequence 
$\seq{x}$ generated by Algorithm 1, for any given choice of parameters $\seq{\widehat\mu}$ and $\{\rho_k\}_{k\in \N}$ conforming to Steps 2 and 3. 
Then, $\bar{x}$ satisfies R-AGP.
\end{theorem}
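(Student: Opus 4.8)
\textit{Proof proposal.} The plan is to produce the two sequences demanded by Definition \ref{def:ragp} directly from the iterates. Since $\xb$ is only a limit point, I would first pass to an infinite subset $N\subseteq\N$ with $x^k\to\xb$ ($k\in N$) and argue along $N$; relabelling then yields the sequence $\seq{x}\to\xb$ required by R-AGP. The natural candidate multiplier is the one already produced by the method, namely $\mu^k$ from \eqref{eqn:almult}, which lies in $(\K^k)\pol$ because it is a nonnegative multiple of a projection onto the cone $(\K^k)\pol$. The first-order condition of R-AGP should then come essentially for free: using the standard fact that $z\mapsto\frac12\|\proj{(\K^k)\pol}{z}\|^2$ is differentiable with gradient $\proj{(\K^k)\pol}{z}$ (this is $\frac12$ of the squared distance to the closed convex cone $\K^k$), the chain rule applied to \eqref{eqn:sal} gives $\nabla L_{\rho_k,\widehat\mu^k}(x^k)=\nabla f(x^k)+Dg(x^k)^*[\mu^k]$, and the stopping rule \eqref{eqn:approxfeasibility} with $\varepsilon_k\to0$ turns this into item 1.

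The whole difficulty is therefore item 2, the approximate complementarity $\langle\mu^k,\proj{\K^k}{g(x^k)}\rangle\to0$. Writing $y^k\doteq g(x^k)+\widehat\mu^k/\rho_k$ for the argument of the projection in \eqref{eqn:almult}, Moreau's decomposition gives $\langle\proj{(\K^k)\pol}{y^k},\proj{\K^k}{y^k}\rangle=0$, hence $\langle\mu^k,\proj{\K^k}{y^k}\rangle=0$ exactly. Thus it suffices to control the mismatch from replacing $y^k$ by $g(x^k)$:
\begin{equation*}
\langle\mu^k,\proj{\K^k}{g(x^k)}\rangle=\langle\mu^k,\proj{\K^k}{g(x^k)}-\proj{\K^k}{y^k}\rangle,
\end{equation*}
which by nonexpansiveness of the projection is bounded in absolute value by $\|\mu^k\|\,\|g(x^k)-y^k\|=\|\mu^k\|\,\|\widehat\mu^k\|/\rho_k$.

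At this point I would split according to the dichotomy recorded after Algorithm \ref{alg:gframework}. If $\rho_k\to\infty$, then $y^k\to g(\xb)$ since $\widehat\mu^k$ stays in the bounded set $\mathcal B$; feasibility of $\xb$ gives $g(\xb)\in\K$, so Lemma \ref{lem:projseq}(2) yields $\proj{\K^k}{y^k}\to g(\xb)$ and hence $\proj{(\K^k)\pol}{y^k}=y^k-\proj{\K^k}{y^k}\to0$. Because $\|\mu^k\|\,\|\widehat\mu^k\|/\rho_k=\|\proj{(\K^k)\pol}{y^k}\|\,\|\widehat\mu^k\|$ (the two factors of $\rho_k$ cancel) and $\|\widehat\mu^k\|$ is bounded, the mismatch vanishes. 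If instead $\rho_k$ is eventually constant and $V^k\to0$, I would use boundedness of $\mathcal B$ to pass to a further subsequence with $\widehat\mu^k\to\widehat\mu$; then from \eqref{eqn:alfeascompmeasure} one has $\mu^k=\widehat\mu^k-\rho_kV^k\to\widehat\mu$, so $\{\mu^k\}$ is bounded, while $\proj{\K^k}{g(x^k)}\to g(\xb)$ and $\proj{\K^k}{y^k}=g(x^k)+V^k\to g(\xb)$ both follow from Lemma \ref{lem:projseq}(2) together with $V^k\to0$; the difference of the two projections therefore tends to $0$ and the mismatch again vanishes.

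The step I expect to be the crux is the bounded-penalty case: there $\proj{(\K^k)\pol}{y^k}$ does \emph{not} tend to zero, so the crude Cauchy--Schwarz bound that settles the case $\rho_k\to\infty$ is useless, and one must instead combine (i) the exact orthogonality $\langle\mu^k,\proj{\K^k}{y^k}\rangle=0$, (ii) boundedness of $\{\mu^k\}$ obtained from $V^k\to0$, and (iii) the convergence of both projections to the common limit $g(\xb)$ guaranteed by Lemma \ref{lem:projseq}(2). The remaining points are routine: verifying the gradient formula for \eqref{eqn:sal} and checking that restricting to the subsequence $N$ does not affect the conclusions.
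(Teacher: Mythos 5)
Your proof is correct, and its skeleton is the paper's: the multipliers are the $\mu^k$ of \eqref{eqn:almult}, item 1 of R-AGP comes from \eqref{eqn:approxfeasibility} with $\varepsilon_k\to 0$, and complementarity is handled through the dichotomy $\rho_k\to\infty$ versus $\rho_k$ eventually constant with $V^k\to 0$. In the unbounded case your argument (Moreau orthogonality at $y^k=g(x^k)+\widehat{\mu}^k/\rho_k$, Cauchy--Schwarz, nonexpansiveness, and Lemma \ref{lem:projseq} item 2 giving $\proj{(\K^k)\pol}{y^k}\to \proj{\K\pol}{g(\xb)}=0$) coincides with the paper's essentially line by line. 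You genuinely diverge in the bounded case. There the paper writes $\mu^k=\widehat{\mu}^k-\rho_k V^k$, extracts a convergent subsequence $\widehat{\mu}^k\to\bar\mu$, obtains $\langle \proj{\K^k}{g(x^k)},\mu^k\rangle\to\langle g(\xb),\bar\mu\rangle$, and then must still prove this limit vanishes: it argues that $V^k\to 0$ forces $\bar\mu=\proj{\K\pol}{\rho_{k_0}g(\xb)+\bar\mu}$ and invokes the fixed-point characterization of conic projections (the paper's footnote) to conclude $\bar\mu\in\K\pol$, $g(\xb)\in\K$, and $\langle g(\xb),\bar\mu\rangle=0$. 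You instead reuse the same mismatch decomposition as in the unbounded case, $\langle\mu^k,\proj{\K^k}{g(x^k)}\rangle=\langle\mu^k,\proj{\K^k}{g(x^k)}-\proj{\K^k}{y^k}\rangle$, and close it with (i) boundedness of $\mu^k=\widehat{\mu}^k-\rho_{k_0}V^k$, (ii) $\proj{\K^k}{g(x^k)}\to g(\xb)$ from Lemma \ref{lem:projseq} item 2 plus feasibility, and (iii) $\proj{\K^k}{y^k}=g(x^k)+V^k\to g(\xb)$ --- note that this last identity is purely algebraic, coming from \eqref{eqn:almult} and \eqref{eqn:alfeascompmeasure}, not a consequence of the Lemma as your wording suggests (a cosmetic slip only). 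Your route is more uniform, since both cases reduce to ``bounded factor times vanishing projection mismatch,'' and it avoids both the fixed-point characterization and the need to pass to a convergent subsequence of $\seq{\widehat\mu}$ (boundedness of $\mathcal{B}$ suffices). What the paper's route buys in exchange is extra information: it identifies the limit multiplier $\bar\mu$ and in effect shows that in the bounded-penalty case $\xb$ is an exact KKT point, not merely an R-AGP point.
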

   
\begin{proof}
First, let us assume for simplicity that $x^k\to \xb$. Let $\seq{\mu}$ be as in~\eqref{eqn:almult} and observe that $\nabla L_{\rho_k,\widehat\mu^k}(x^k)=\nabla_{x} L(x^k, \mu^k)\to 0$. If $\rho_k\to \infty$, then
\[
	\langle \Pi_{\K^k}(g(x^k)),\mu^k\rangle = \rho_k\left\langle \Pi_{\K^k}(g(x^k))-\Pi_{\K^k}\left(g(x^k)+\frac{\widehat{\mu}^k}{\rho_k}\right),\Pi_{(\K^k)\pol}\left(g(x^k)+\frac{\widehat{\mu}^k}{\rho_k}\right)\right\rangle
\]
which implies (using Cauchy-Schwarz inequality and the nonexpansiveness of the projection) that
\[
	\begin{aligned}
		|\langle \Pi_{\K^k}(g(x^k)),\mu^k\rangle| & \leqslant \rho_k \left\| \Pi_{\K^k}(g(x^k))-\Pi_{\K^k}\left(g(x^k)+\frac{\widehat{\mu}^k}{\rho_k}\right) \right\| \left\| \Pi_{(\K^k)\pol}\left(g(x^k)+\frac{\widehat{\mu}^k}{\rho_k}\right) \right\| \\
		& \leqslant\rho_k\left\| \frac{\widehat{\mu}^k}{\rho_k} \right\| \left\| \Pi_{(\K^k)\pol}\left(g(x^k)+\frac{\widehat{\mu}^k}{\rho_k}\right) \right\|
	\end{aligned}
\]
but due to Lemma~\ref{lem:projseq} item 2, since $\rho_k\to \infty$ we know that $\|\Pi_{(\K^k)\pol}(g(x^k)+\widehat{\mu}^k/\rho_k)\|\to \|\Pi_{\K\pol}(g(\xb))\|=0$, so $\langle \Pi_{\K^k}(g(x^k)),\mu^k\rangle\to 0$. If $\rho_k=\rho_{k_0}$ for some $k_0$ and every $k\geq k_0$, then using the fact
\[
	\mu^k=\widehat{\mu}^k-\rho_k v^k
\]
we have that
\[
	\begin{aligned}
			\langle \Pi_{\K^k}(g(x^k)),\mu^k\rangle & = \langle \Pi_{\K^k}(g(x^k)),\widehat{\mu}^k \rangle - \langle  \Pi_{\K^k}(g(x^k)), \rho_k v^k\rangle,
	\end{aligned}
\]
but for any convergent subsequence of $\seq{\widehat{\mu}}$, which we will assume to be itself, so $\seq{\widehat{\mu}}\to \bar\mu$, 
we recall that $v^k\to 0$ and $\rho_k\to \rho_{k_0}$ to obtain that
\[
	\begin{aligned}
			\langle \Pi_{\K^k}(g(x^k)),\mu^k\rangle & \to \langle \Pi_{\K}(g(\xb)),\bar{\mu} \rangle=\langle g(\xb),\bar{\mu} \rangle=0,
	\end{aligned}
\]
which follows from $v^k\to 0$, since it implies $\bar{\mu}=\Pi_{\K\pol}(\rho_{k_0}g(\xb)+\bar\mu)$, which in turn holds if, and only if, $\bar\mu\in \K\pol$, $g(\xb)\in \K$, and $\langle g(\xb),\bar\mu \rangle=0$\footnote{In general, for any closed convex cone $\K\subset \E$ and any $y,z\in \E$, note that $z=\Pi_{\K\pol}(y+z)$ implies that $\Pi_{\K}(y+z)=y+z-\Pi_{\K\pol}(y+z)=y$.}. Thus, $\xb$ satisfies R-AGP.
\end{proof}

The proof above is an adaptation of the proof of \cite[Theorem 4.1]{Andreani2020}. Moreover, it is easy to show that if $\xb$ satisfies R-AGP and Robinson's CQ:
\[
	0\in\textnormal{int}(\Im(Dg(\xb)) + \K - g(\xb)),
\]
then $\xb$ must also satisfy the KKT conditions. This is a consequence of the boundedness of $\seq{\mu}$ in the presence of Robinson's CQ. Indeed, if $\seq{\mu}$ is not bounded, then we may take a subsequence such that $\|\mu^k\|\to \infty$, so
\[
	\lim_{k\to\infty}\frac{\nabla f(x^k)}{\|\mu^k\|}+Dg(x^k)^*\left[\frac{\mu^k}{\|\mu^k\|}\right]=\lim_{k\to \infty} Dg(x^k)^*\left[\frac{\mu^k}{\|\mu^k\|}\right]=0,
\]
but on the other hand we also have
\[
	\left\langle \Pi_{\K^k}(g(x^k)) , \frac{\mu^k}{\|\mu^k\|} \right\rangle\to 0 \quad \textnormal{and} \quad \Pi_{\K^k}(g(x^k))\to \Pi_{\K}(g(\xb))= g(\xb)
\]
by Lemma~\ref{lem:projseq} item 2. Define $H\doteq \Im(Dg(\xb)) + \K - g(\xb)$ and the above reasoning tells us that any limit point $\tilde\mu\neq 0$ of $\{\mu^k/\|\mu^k\|\}_{k\in \N}$ must belong to $H\pol= \Ker(Dg(\xb))\cap \K\pol\cap \{g(\xb)\}^{\perp}$. However, since $H\pol$ is a cone, this also means that $\alpha \tilde{\mu}\in H\pol$ for every $\alpha>0$. On the other hand, by Robinson's CQ, we also have that $\alpha \tilde{\mu}\in H$ for every $\alpha>0$ small enough, implying that $\tilde{\mu}=0$, which is a contradiction.

The proposed algorithm has some theoretical and numerical limitations. On the one hand, in case $\rho_k\rightarrow \infty$, the condition numbers of auxiliary problems would be unbounded. As a consequence, the numerical solution of auxiliary problems turn numerically unstable and difficult to deal with. This case may be expected when Robinson's CQ fails to hold.

On the other hand, for the interesting case of non polyhedral cones, a rapid local convergence can not be ensured, since the augmented Lagrangian does not include any information regarding the curvature of the cone. Nevertheless, the proposed algorithm provides a useful setting for the case of nonpolyhedral cones that are numericall difficult to represent and that lack of an efficient algorithm for calculating the projection.

\section{Continuous (polyhedral) approximation of the cone of co\-po\-si\-ti\-ve matrices}
\label{sec:approximation}

Now, we illustrate a continuous outer approximation of $\K$ (Definition~\ref{def:goodapprox}) by polyhedra through a
concrete example. Note that as discussed previously, this approximation satisfies the assumptions of Theorem \ref{thm:ragpnec}.

The cone of copositive matrices and its dual, the cone of
completely positive matrices,
are well studied closed convex cones. It is known that many NP-hard problems can be formulated as convex problems employing these cones \cite{duradapt,dursurvey,dursurvey2,shakedmonderer21}.
Among several inner and outer hierarchical and asymptotic approximations of
these cones, we consider the polyhedral approximation proposed by
Y{\i}ld{\i}r{\i}m \cite{alper}, due to its simplicity and cheap orthogonal projection onto.

For integers $m\geq 1$ and $r\geq 0$, consider the discrete set of vectors in $\R^m$ 
formed by regular grids of rational points on the
unit simplex:
\[
\delta^m_r\doteq\bigcup_{k=0}^r \{z\in \Delta_m \ : \ (k+2)z\in \N^m\},
\]
where $\Delta_m\doteq \{y\in\R^m \ : \ e^Ty=1\}$ is the $(m-1)$-dimensional unit simplex.
Let $\ell_r^m:=|\delta^m_r|$ be the number of elements in $\delta^m_r$, which can be roughly upper bounded
by $m^2(m^{r+1}-1)/(m-1)$ \cite{alper}.

Let $\SC^m$ be the set of $m\times m$ real symmetric matrices. 
For each $r=0,1,\ldots$, we define the following closed convex cones, which in fact are
polyhedra:
\[
\OC^m_r\doteq  \{Y\in \SC^m \ : \ d^TYd\geq 0, \forall d\in \delta^m_r\}
\]
and its dual
\begin{equation}
\left(\OC^m_r\right)^*\doteq \left\{\sum_{d\in\delta^m_r} \lambda_d dd^T \ : \ \lambda_d \geq 0\right\},
\label{eq:dualapprox}
\end{equation}
which give an outer approximation of the closed convex cone of copositive matrices
\[
\CC^m\doteq  \{Y\in \SC^m \ : \ u^TYu\geq 0, \forall u\in \R^m_+\}
\]
and an inner approximation of the closed convex cone of completely positive matrices
\[
\left(\CC^m\right)^* \doteq \left\{\sum_{i=1}^k (v_i)(v_i)^T \ : \ k\geq 1, v_i\in\R^m_+\right\},
\]
respectively.

It can be shown \cite{alper} that 
\[
(\OC^m_0)^*\subseteq(\OC^m_1)^*\subseteq \cdots \subseteq (\CC^m)^*
\subseteq \SC^m_+\cap \NC^m\subseteq \SC^m_+ \subseteq 
\SC^m_++\NC^m\subseteq
\CC^m \subseteq \cdots \subseteq
\OC^m_1 \subseteq \OC^m_0,
\]
$\textrm{cl}\left(\displaystyle\bigcup_{r\in \N}(\OC^m_r)^*\right)=(\CC^m)^*$, and
$\displaystyle\bigcap_{r \in \N}\OC^m_r=\CC^m$ for $\NC^m:=\{Y\in\SC^m \ : \
Y_{ij}\geq 0, \forall i,j=1,\dots, m\}$ where $\SC^m_+$ ($\SC^m_-$) is 
the cone of $m\times m$ positive (negative) semidefinite symmetric matrices. Moreover
$(\CC^m)^*=\SC^m_+\cap \NC^m$, $\CC^m=\SC^m_++\NC^m$ for $m\leq 4$, and $\{\OC^m_r\}_{r\in \N}$ is an outer continuous approximation of $\CC^m$ (Definition~\ref{def:goodapprox}).

An advantage of 
our implementation based on the polyhedral approximations is that the sets $\K^k$ are defined by a moderate number of linear inequalities, such that the computational cost of projections is also moderate.
Namely, for $Y\in\SC^m$, let us compute its orthogonal projection onto the polar
cone of $\K^k:=\{Y\in\SC^m \ : \ d^T_iYd_i\geq 0, \ \forall d_i\in J^k\}\supseteq \OC^m_{\bar{r}}$ 
for some $\bar{r}\geq 0$ and $\emptyset \not=J^k\subseteq \delta^m_{\bar{r}}$.
Since $\Pi_{(\K^k)\pol}(Y)=-\Pi_{(\K^k)^*(-Y)}$, 
we need to solve the following optimization problem to
obtain the orthogonal projection $-Z$ of $Y$:
\[
\begin{array}{ll}\textrm{Minimize} & \|-Y-Z\|_F^2, \\
\textrm{subject to} & Z\in (\KC^k)^*. 
\end{array}
\]
Using the definition of $\KC^k$, the above problem is in fact
a convex quadratic program with $|J^k|$ non-negative variables:
\begin{equation}
\begin{array}{ll}\textrm{Minimize} & \lambda^TR\lambda+2s^T\lambda, \\
\textrm{subject to} & \lambda \in \R^{|J^k|}_+
\end{array}
\label{eqn:convexqp}
\end{equation}
for $R_{ij}=(d_i^Td_j)^2$ and $s_i=d_i^TYd_i$, where $d_i\in J^k$
for $(i,j=1,2,\ldots,|J^k|)$. This is due to the fact that
if we 
display all vectors of $J^k$ as rows of a larger matrix
\[
D\doteq \left(\begin{array}{c} d_1^T \\ d_2^T \\ \vdots \\ d^T_{|J^k|} \end{array}\right),
\]
the matrix $R$ will be the self Hadamard product of the Gram matrix $DD^T$, which 
is positive semidefinite. Thus, $R=(DD^T)\circ(DD^T)$ is also positive semidefinite.

As we can observe, other closed convex cones such as exponential, hyperbolicity \cite{Lourenco24} or any other cone can be treated in a similar manner as long as we have a continuous approximation of them by polyhedra.
This will permit us to minimize nonlinear functions over these cones by our proposed algorithm.

\section{Numerical Experiments}
\label{sec:numerical}

\subsection{Test problems}
\label{subsec:test}

As of our knowledge, there is no benchmark problems or reported computational results on minimizing nonlinear objective functions on difficult
convex cones such as the copositive one. 
Therefore, we created a set of 14  (NCP)  test problems in order to verify the performance of the proposed algorithm. Our test problems will have the (NCP) structure 
\begin{equation*}
	\begin{aligned}
	& \underset{x\in \R^n}{\text{Minimize}}
	& & f(x), \\
	& \text{subject to}
	& & g(x)\in \K,\\
	\end{aligned}	
\end{equation*}
with $f(x)$ a selected nonlinear objective functions from \cite{minlplib,lue07,MGH81} (see Table~\ref{tab:function}) and $g(x)=Q_0+\sum_{i=1}^nx_iQ_i$ a randomly selected linear matrix conic constraint. The function $g(x)$ is taken linear just to simplify the random generation, but for the algorithm this is not a critical issue. In any case, the considered constraint $g(x)\in \K$ is not linear in its nature, and hard due to the difficulty already compressed in the cone $\K$.

\begin{table}[!htbp]
\caption{Nonlinear objective functions $f(x)$ for (NCP).}
\label{tab:function}
\begin{center}  
{\footnotesize
\begin{tabular}{|l|l|c|} \hline
\multicolumn{1}{|c|}{function's name} & \multicolumn{1}{c|}{function $f(x)$} & \multicolumn{1}{c|}{known local min. solutions} \\ \hline
convex quadratic (cq) & $x_1^2+x_2^2$ & $(0,0)$ \\ 
fractional convex (fc) & $x_1^2/(1+|x_1|)+x_2^2/(1+|x_2|)$ & $(0,0)$ \\ 
extended Rosenbrock (eR) \cite{MGH81}& $\displaystyle \sum_{i=1}^{n-1} (1-x_i)^2+100(x_{i+1}-x_i^2)^2$ & $(1,1,\ldots,1)$ \\ 
Freudenstein and Roth (FR) & $[-13+x_1+((5-x_2)x_2-2)x_2]^2$ & $(5,4)$ and  \\ 
\cite{MGH81}& \multicolumn{1}{r|}{$+[-29+x_1+((x_2+1)x_2-14)x_2]^2$} & $(11.41\ldots, -0.8968\ldots)$\\ 
Powell badly scaled (Pbs) \cite{MGH81}& $(10^4x_1x_2-1)^2+(e^{-x_1}+e^{-x_2}-1.0001)^2$ & $(1.098\ldots \cdot10^{-5},9.106\ldots)$ \\  
Beale (B) \cite{MGH81}& $[1.5-x_1(1-x_2)]^2+[2.25-x_1(1-x_2^2)]^2$ & $(3,0.5)$ \\
& \multicolumn{1}{r|}{$+[2.625-x_1(1-x_2^3)]^2$} & \\  
Powell singular (Ps) \cite{MGH81}& $(x_1+10x_2)^2+\sqrt{5}(x_3-x_4)^2+(x_2-2x_3)^4$ & $(0,0,0,0)$ \\
& \multicolumn{1}{r|}{$+10(x_1-x_4)^4$} & \\  
Wood (W) \cite{MGH81}& $100(x_2-x_1^2)^2+(1-x_1)^2+90(x_4-x_3^2)^2$ & $(1,1,1,1)$ \\
& \multicolumn{1}{r|}{$+(1-x_3)^2+10(x_2+x_4-2)^2+(x_2-x_4)^2/10$} & \\  
quartic polynomial (qp) & $\displaystyle\sum_{i=1}^n(x_i-1)^2+\left[\sum_{i=1}^ni(x_i-1)\right]^2+\left[\sum_{i=1}^ni(x_i-1)\right]^4$ & $(1,1,\ldots,1)$\\  
Luenberger-Ye (LY) \cite{lue07}  & $x_1^2-5x_1x_2+x_2^4-25x_1-8x_2$ & $(20,3)$\\  
ex4\_1\_5 \cite{minlplib} & $2x_1^2-1.05x_1^4+\frac{5}{30}x_1^6-x_1x_2+x_2^2$ & $(0,0)$ and \\  
& & $\dag$ \ $(\pm 1.74755,\pm 0.97378)$ \\
ex8\_1\_4 \cite{minlplib} & $12x_1^2-6.3x_1^4+x_1^6-6x_1x_2+6x_2^2$ & $(0,0)$\\  
ex8\_1\_5 \cite{minlplib} & $4x_1^2-2.1x_1^4+\frac{1}{3}x_1^6+x_1x_2-4x_2^2+4x_2^4$ & $(0,0)$ and \\  
& & $(0.08984,-0.71266)$ \\
ex8\_1\_6 \cite{minlplib} & $[0.1+(x_1-4)+(x_2-4)^2]^{-1}$ & $(1.0004,1.0004)$ and \\
& \multicolumn{1}{r|}{$-[0.2+(x_1-1)^2+(x_2-1)^2]^{-1}$} & $(3.99995,3.99995)$  \\
& \multicolumn{1}{r|}{$-[0.2+(x_1-8)^2+(x_2-8)^2]^{-1}$} & \\  \hline 
\multicolumn{3}{l}{\footnotesize{$\dag$ Not reported in \cite{minlplib}.}}
\end{tabular}}
\end{center}
\end{table}

We set $\K=\CC^m$ and generated $g(x)$ for a given nonlinear objective function  $f\colon \R^n\rightarrow \R$ by the following procedure:
\begin{itemize}
\item Fix a known local minimal solution $x^*\in\R^n$ of the objective $f(x)$ (see Table~\ref{tab:function}).
\item Choose a randomly generated vector
$\bar{x}\in\R^n$ where each component is from uniformly distributed samples on the interval $[10,100]$.
\item Select two positive semidefinite symmetric matrices, $P_1$ and $P_2$, whose eigenvalues are from uniformly distributed samples on the interval $[0,1]$.
\item Set the matrix variables $Q_0,Q_1,\ldots,Q_n\in\SC^m$ as a solution of the following semidefinite program
\[
\begin{array}{ll}
\displaystyle \textrm{Minimize}_{Q_0,\ldots,Q_n\in\SC^m}  & \displaystyle \textrm{tr}(Q_0+\sum_{i=1}^n\bar{x}_iQ_i), \\
\textrm{subject to} & \displaystyle Q_0+\sum_{i=1}^nx_i^*Q_i +P_1 \in \SC^m_-, \\
& \displaystyle Q_0+\sum_{i=1}^n\bar{x}_iQ_i-P_2 \in \SC^m_+. \\
\end{array}
\]
\end{itemize}

The above procedure guarantees that $g(x) \doteq Q_0+\sum_{i=1}^nx_iQ_i$ is such that $g(x^*)$ and $g(\bar{x})$ have non-positive and non-negative eigenvalues, respectively. Therefore, $g(x^*)\notin \K$ and $g(\bar{x})\in \K$.
Cvxpy version~1.3.1 was used to solve the above
semidefinite program.

\subsection{Details of the implementation}

Algorithm~\ref{alg:gframework} computes an approximate local optimal solution when $\|\nabla L_{\rho_k,\widehat\mu^k}(x^k)\|$ and
$\|v^k\|$ are small enough since they correspond to the norms of derivatives of
the safeguarded augmented Lagrangian function $L_{\rho,\widehat\mu}(x)$ in 
relation to $x^k$ and $\widehat{\mu}^k$, respectively.
Therefore, $\seq{x}$ and $\seq{\mu}$ computed by Algorithm~\ref{alg:gframework} are sequences which define a limit point
$\bar{x}$ satisfying R-AGP according to Theorem~\ref{theo:fakkt}. 

Simple schemes were sought to define $\{\varepsilon_k\}_{k\in\N}$ and  $\seq{\KC}$ in Algorithm~\ref{alg:gframework}.
Among others, the empirical choice $\varepsilon_k\doteq\min\{\varepsilon_0,\|v^k\|_{\max}\}$ seemed to provide fast convergence,
where $\|v\|_{\max} = \max_{1\leq i,j \leq m} |v_{ij}|$. 
This strategy revealed to be superior than a conventional choice $\varepsilon_k\doteq \eta^k\varepsilon_0$ for $0 < \eta < 1$ 
which guarantees $\varepsilon_k \rightarrow 0$. Although our strategy may not guarantee $\varepsilon_k \rightarrow 0$, it 
appears to be reasonable when we expect that $\|v^k\|_{\max}\rightarrow 0$, whenever we succeed to compute $x^k$ satisfying (\ref{eqn:approxfeasibility}) at every iteration.

The strategy to choose $\seq{\K}$ has more variations and after some 
tests, we decided that the best strategy was to fix first the positive
integers $r_{\max}$, which defines $\OC^m_{r_{\max}}\supseteq \CC^m$ (see Section~\ref{sec:approximation}), and $\zeta<|\delta^m_{r_{\max}}|-|\delta^m_0|$. 
Then, we order all vectors in $\delta^m_{r_{\max}}$ such that 
the first $\ell^m_0=|\delta^m_0|$ elements are the vectors in $\delta^m_0$ (in any order), followed by the vectors
in $\delta^m_1\backslash\delta^m_0$ (in any order), until the vectors in $\delta^m_{r_{\max}}\backslash\delta^m_{r_{\max}-1}$
(in any order).
Set $J^0\doteq\delta^m_0$ in $\KC^k=\{Y \in\SC^m \ : \ d^T_iYd_i\geq 0, \ \forall d_i\in J^k\}$, and $J^k\doteq J^{k-1}\cup \{\textrm{first $\zeta$ vectors (whenever possible) in $\delta^m_{r_{\max}}$ which are not in $J^{k-1}$ in the above order}\}$. That is, at every iteration of Algorithm~\ref{alg:gframework}, $\K^k$ is defined by $|J^k|$ vectors and has exactly $\zeta$ more vectors than $|J^{k-1}|$, unless $0<|\delta^m_{r_{\max}}\backslash J^{k-1}|< \zeta$. In our numerical experiments (see Section~\ref{subsec:experiments}),
we considered test problems for $m=3$ and $r_{\max}=15$ which gives $\ell^m_{r_{\max}}=901$,
and $m=5$ and $r_{\max}=7$ which gives $\ell^m_{r_{\max}}=1816$. Notice that in order for our approximation of $\KC=\CC^m$ to be continuous as in Definition~\ref{def:goodapprox}, we would need to keep approximating the sets $\delta^m_r$ for $r>r_{\max}$, however, this is numerically intractable. 

Another detail apparently hidden in the implementation of Algorithm~\ref{alg:gframework} is the radius $R$ of the
closed ball ${\mathcal B}$. 
 At every iteration, we need to project $\mu^k$ onto ${\mathcal B}$ at step 2 and $R$ should be large enough to not bound the real
size of the Lagrange multiplier $\widehat{\mu}^{k+1}\in {\mathcal B}\cap (\KC^k)^{\pol}$. 
In the numerical experiments, we set 
$R=10^{12}$.

Finally, a scaling of the safeguarded augmented Lagrangian function $L_{\rho_k,\widehat{\mu}^k}(x)$ is effective, specially because some objective functions $f(x)$ are badly scaled. 
We divided the function
$L_{\rho_k,\widehat\mu^k}(x)$ (\ref{eqn:sal}) by
the average of the first five iterations of $\max\{1,\|\nabla L_{\rho_{k},\widehat{\mu}^{k}}(x^{k-1})\|_{\infty},\|\nabla f(x^{k-1})\|_{\infty}\}$.

\subsection{Numerical results}
\label{subsec:experiments}
All numerical experiments were performed on Intel Core i7-10700 (2.90GHz, 8 cores) processor with 8GB of memory running python 3.9.16. Since the function (\ref{eqn:sal}) is only once differentiable \cite{Haeser2022}, scipy.optimize function with option ``method='BFGS' " was used to solve it with ``gtol=$\varepsilon_k$". This option was the best choice to have less failure
to compute an approximate $x^k$ satisfying (\ref{eqn:approxfeasibility}), which seems to be the achilles heel of the algorithm. The convex quadratic problem (\ref{eqn:convexqp}) on the other hand was solved by mosek 10.0.43.

 We adopted the following stopping criterion for Algorithm~1. The algorithm stops successfully if $\|\nabla L_{\rho_k,\widehat\mu^k}(x^k)\|\leq\varepsilon_L$, $\|v\|_{\max}\leq \varepsilon_V$, and $r=r_{\max}$. We forcefully stop the algorithm whenever we fail to compute (\ref{eqn:approxfeasibility}) in overall of 20\% of iterations, respecting a minimum of 14 iterations, since we observed that Algorithm~1 fails to converge whenever we cannot compute $x^k$ satisfying 
(\ref{eqn:approxfeasibility}) in some consecutive iterations.

The following parameter values were set for Algorithm~1: each coordenate of $x^{-1}\in\R^n$ randomly chosen from interval $[-100,100]$,
$v^{-1}=\frac{\widehat{\mu}^{0}}{\rho_{0}}-\bproj{(\K^0)\pol}{g(x^{-1})+\frac{\widehat{\mu}^0}{\rho_0}}$, $\widehat{\mu}^0=RI$, $R=10^{12}$, $\sigma=0.9$, $\tau=2.0$, $\varepsilon_V=\varepsilon_L=10^{-5}$; and $\rho_0=0.1$, $\varepsilon_0=1.0$ for $m=3$, and $\rho_0=1.0$, $\varepsilon_0=0.1$ for $m=5$, respectively. $I$ is the $m\times m$ identity matrix.
Choosing a large value $\widehat{\mu}^0$ increases numerical stability of the algorithm as this implies that $\|v^{-1}\|$ is small (see (\ref{eqn:alfeascompmeasure})).

Our first experiment aims to determine
the parameter $\zeta$, which is the number of vectors to add at every iteration to form $J^k$ from $J^{k-1}$ (see Section~\ref{sec:approximation}). 
Figures~\ref{fig:cm3-zeta} and~\ref{fig:cm5-zeta} show the performance profile in terms of wall-clock time when solving the 14 test problems 
described in Subsection~\ref{subsec:test} for matrix orders $m=3$ and $m=5$,
respectively. We can conclude that, in fact, there is a clear preference for the parameter $\zeta$ and it depends on the matrix sizes in this particular setting. 
Therefore, we fix $\zeta=45$ and $\zeta=70$ for the problems with matrix orders $m=3$ and $m=5$, respectively, in the main part of the 
numerical experiments.


\begin{figure}[!htbp]
\begin{center}   
\includegraphics[scale=0.43]{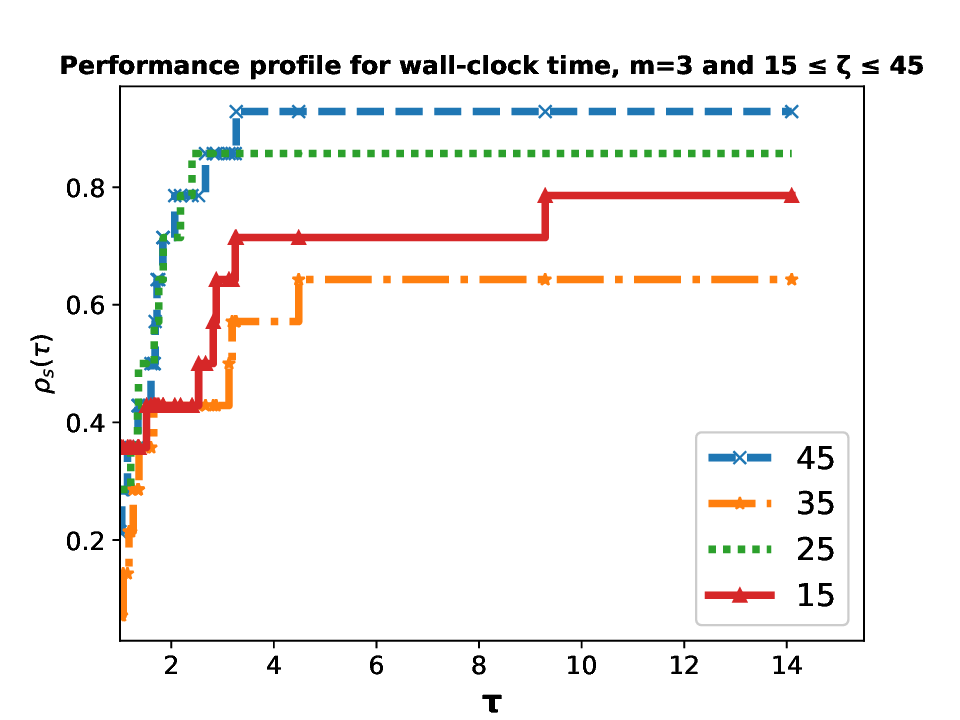}
\includegraphics[scale=0.43]{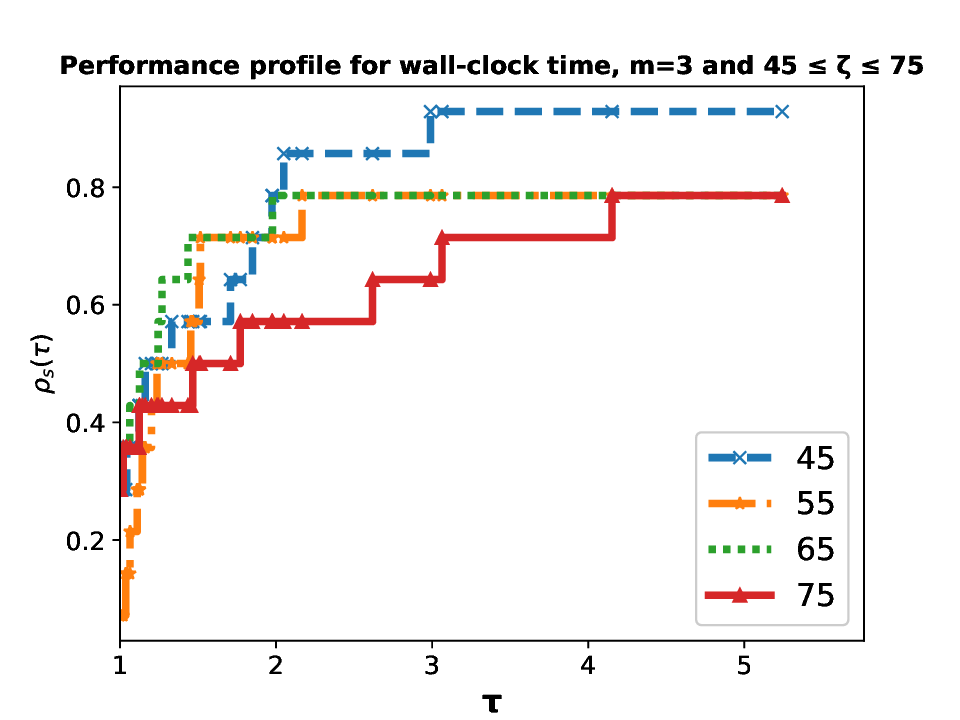}
\caption{Performance profile for $m=3$ when solving 14 problems by the ``proposed" method for values of $\zeta=15, 25, 35, 45, 55, 65,$ and $75$. It shows that $\zeta=45$ is the best choice.} 
\label{fig:cm3-zeta}
\end{center}
\end{figure}


\begin{figure}[!htbp]
\begin{center}   
\includegraphics[scale=0.48]{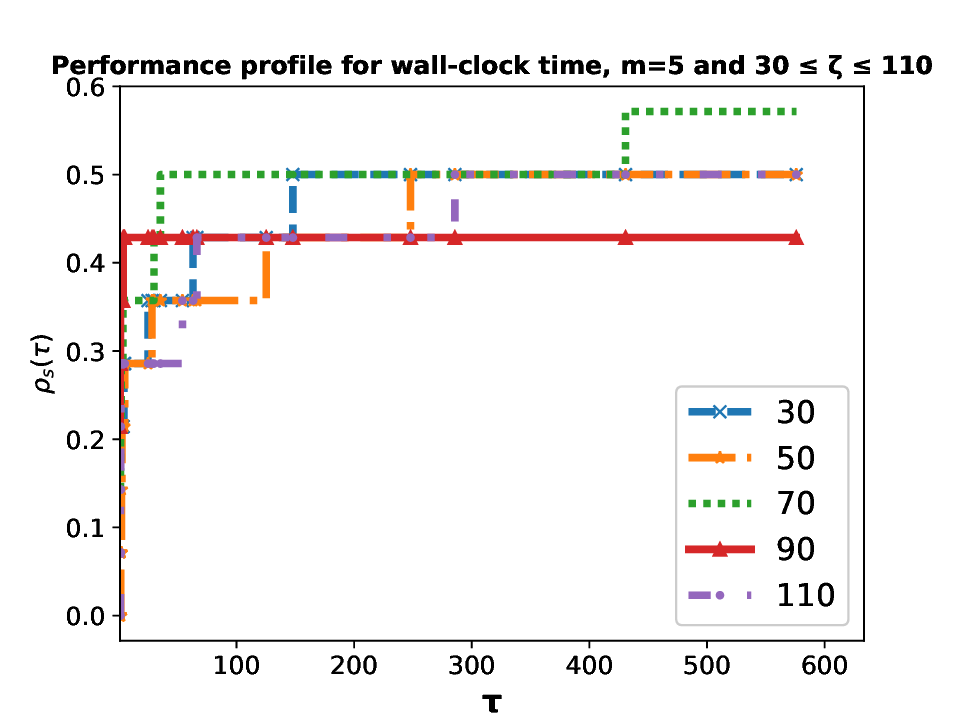}
\caption{Performance profile for $m=5$ when solving 14 problems by the ``proposed" method for values of $\zeta=30, 50, 70, 90,$ and $110$. It shows that $\zeta=70$ is preferred.}
\label{fig:cm5-zeta}
\end{center}
\end{figure}

In order to measure the effectiveness of our proposal in practice, we compared our 
algorithm to the standard augmented Lagrangian method applied to (NCP) where we just considered $J^0=J^1=\cdots=\delta^m_{r_{\max}}$. That is, the $\KC^k$ is fixed to the best approximation from the first iteration. We refer to this approach as ``standard" in our numerical results in contrast to ``proposed" for our proposal. 


Table~\ref{tab:main1} 
details the numerical results for 14 problems when $m=3$ and $m=5$. 
The numbers of third and fourth columns are in boldface when they meet the stopping criterion as well as the column for $r$. The values of this column should be equal to $15$ (when $m=3$) and $7$ (when $m=5$), which correspond to the $r_{\max}$ fixed beforehand.
Observe that in problems such as ``fc", ``proposed" stops without attaining the pre-defined approximation for $\KC$, where $r$ should be 15 and 7 (instead of 14 and 5) due to excessive failure (4 out of 17 iterations and 10 out of 14 iterations) to satisfy (\ref{eqn:approxfeasibility}), respectively.
The column ``fails" indicates the number of times (\ref{eqn:approxfeasibility}) is not met for the current $\varepsilon_k$ among all iterations at ``it."

We also computed 
$\min_{Y\in\SC^m_++\NC^n} \|g(x^k)-Y\|_F$ that gave values between $3.6$e-$15$ to $6.9$e-$02$, omitted from the table. If this value is close to zero, we can understand that $g(x^k)\in \KC=\CC^m$,
since $\SC^m_++\NC^m\subseteq\CC^m$, with equality holding for $m\leq 4$ (see Section~\ref{sec:approximation}). For instance for problem ``Pbs" with $m=3$ and ``proposed", we obtained this smallest value, indicating that $g(x^k)\in\CC^3$, while for problem ``LY" with $m=5$, both for ``proposed" and ``stardard", we obtained this largest value indicating that $g(x^k)\not\in\SC^5_++\NC^5$, but not certain whether $g(x^k)\in\CC^5$.

\begin{table}[!htbp]
\begin{center}  
\caption{Numerical results for 14 problems when solving by the proposed method (``proposed") and the ``standard" augmented Lagrangian method with order of matrices $m=3$ and $m=5$. It is considered solved whenever $\|\nabla L_{\rho_k,\widehat\mu^k}(x^k)\|\leq\varepsilon_L$, $\|v^k\|_{\max}\leq\varepsilon_V$ and $r=r_{\max}$ (which are in bold); $r_{\max}=15$ for $m=3$ and $r_{\max}=7$ for $m=5$; ``fails" means \# of iterations (\ref{eqn:approxfeasibility}) was not satisfied.}
\label{tab:main1}
{\footnotesize
\begin{tabular}{|c|c|r|r|r|r|r|r|r|} \hline
problem & strategy & $\|\nabla L_{\rho_k,\widehat\mu^k}(x^k)\|$ & $\|v^k\|_{\max}$  & $r$ & it. & fails & wall time (s) & time/it. (s) \\ \hline
cq ($m=3$) & proposed & {\bf 4.197e-06} & {\bf 3.590e-07} & {\bf 15} & 30 & 2 & 148.17 & 4.78 \\
& standard & {\bf 4.203e-06} & {\bf 3.928e-06} & {\bf 15} & 29 & 1 & 170.45 & 5.68 \\ \cline{2-9}
cq ($m=5$) & proposed & {\bf 6.079e-07} & {\bf 3.702e-06} & {\bf 7} & 31 & 5 & 1621.38 & 50.67 \\
& standard & {\bf 6.836e-07} & {\bf 3.141e-06} & {\bf 7} & 26 & 1 & 1251.94 & 46.37 \\ \hline
fc ($m=3$) & proposed  & 1.272e-02 & 4.045e-03 &  14 & 17 & 4 & 94.74 & 5.26 \\
& standard & 1.225e-02 & 4.155e-03 &  {\bf 15} & 17 & 4 & 221.83 & 12.32 \\  \cline{2-9} 
fc ($m=5$) & proposed & 4.005e-02 &4.167e-04 &  5 &14 & 10& 170.68 & 11.38 \\
&standard & 9.934e-03 & 5.303e-02 &  {\bf 7} & 14 & 9 & 2920.29 & 194.69 \\ \hline
eR ($m=3$)& proposed & {\bf 1.432e-07} & {\bf 1.107e-08} &  {\bf 15} & 45 & 1 & 235.02 & 5.11 \\
$n=5$ & standard & {\bf 3.528e-08} & {\bf 9.324e-11} &  {\bf 15} & 43 & 0 & 201.86 & 4.59 \\ \cline{2-9}
eR ($m=5$)& proposed & {\bf 3.068e-07} & {\bf 7.386e-09} &  {\bf 7} &36&
1& 2763.04 & 74.68 \\
$n=5$ & standard & {\bf 9.626e-06} & {\bf 2.278e-07} &  {\bf 7} & 35 & 0 & 1663.28  & 46.20 \\ \hline
FR ($m=3$) & proposed & {\bf 9.131e-06} & {\bf 7.198e-07} & {\bf 15} & 37 & 0 & 56.19 & 1.48  \\
&standard & {\bf 1.323e-08} & {\bf 9.427e-09} &  {\bf 15}  & 37 & 0 & 150.44 & 3.96 \\ \cline{2-9}
FR ($m=5$) & proposed & {\bf 3.925e-06} & {\bf 4.340e-07} &  {\bf 7} & 33 & 1 & 258.59 & 7.61 \\
& standard & {\bf 4.008e-06} & {\bf 1.848e-10} &  {\bf 7} & 33 & 0 & 1036.54 & 30.49 \\ \hline
Pbs ($m=3$)& proposed &  {\bf 2.467e-15} & {\bf 2.014e-10} &  {\bf 15} & 20 & 0 & 16.04 & 0.76\\
&standard & {\bf 1.779e-10} & {\bf 3.332e-10} &  {\bf 15} & 1 & 0 & 28.27 & 14.14 \\ \cline{2-9}
Pbs ($m=5$)& proposed & {\bf 9.794e-06} & {\bf 1.352e-06} &  {\bf 7} &  48 & 2&  1369.89 & 27.96 \\
& standard & 1.429e-05 & {\bf 5.431e-06} &  {\bf 7} & 60 & 12 & 4753.46 & 77.93 \\ \hline
B ($m=3$)& proposed & {\bf 5.611e-08} & {\bf 8.549e-08} &{\bf 15} & 79 & 0 & 228.36 & 2.85 \\
&standard & {\bf 2.299e-09} & {\bf 8.696e-07} & {\bf 15} & 79 & 0 & 237.34 & 2.97 \\ \cline{2-9}
B ($m=5$)& proposed & 1.129e-03 & 2.099e-03 &  {\bf 7} &  84 & 17 & 8708.77 & 102.46 \\
&standard & 2.757e-03 & 2.714e-04 & {\bf 7} & 82 & 17 & 8407.61 & 101.30 \\ \hline
Ps ($m=3$)& proposed & {\bf 7.449e-06} & {\bf 3.722e-06} & {\bf 15} & 43 & 5 & 617.06 & 14.02 \\
& standard & 1.964e-04 & {\bf 2.181e-06} &  {\bf 15} & 55 & 11 & 693.93 & 12.39 \\ \cline{2-9}
Ps ($m=5$)&proposed  & {\bf 8.866e-06}  & 1.422e-05 &  {\bf  7} &  37
& 8 & 2787.54 & 73.36 \\ 
&standard & 1.111e-05 & 1.584e-05 & {\bf 7} & 40 & 8 & 4178.89 & 101.92 \\ \hline
W ($m=3$) & proposed & {\bf 9.936e-06} & {\bf 5.948e-06} & {\bf 15} & 45 & 1 & 345.71 & 7.52 \\
&standard & {\bf 8.095e-06} & {\bf 8.633e-06}  & {\bf 15} & 43& 2 & 274.10 & 6.23 \\ \cline{2-9}
W ($m=5$) & proposed & 2.418e-04 &1.629e-04 &  {\bf  7} & 49 &
10 &  6986.63 & 139.73 \\ 
&standard  & 5.245e-05 & 8.211e-05 &  {\bf 7} & 50 & 10 & 7367.50 & 144.46 \\ \hline
qp ($m=3$) &proposed & {\bf 9.756e-06} & {\bf 9.944e-06}& {\bf 15} & 34 & 1 & 134.30  & 3.84 \\
$n=5$ &standard & 1.210e-05 & {\bf 1.464e-06}  & {\bf 15} & 47 & 10 & 608.66 & 12.68 \\ \cline{2-9}
qp ($m=5$) & proposed & {\bf 2.375e-07} &  {\bf 9.955e-07} &  {\bf 7} &  34 & 
1 & 1098.84 & 31.40 \\
$n=5$ & standard & {\bf 1.564e-06} & {\bf 9.912e-09}  & {\bf 7} & 34 & 0 & 959.36 & 27.41 \\ \hline
LY ($m=3)$ & proposed & {\bf 9.260e-07} & {\bf 4.750e-06}  & {\bf 15} & 37 & 3 & 206.93 & 5.45 \\
& standard & {\bf 1.892e-06} & {\bf 2.072e-06} &  {\bf 15} & 40 & 5 & 437.75 & 10.68 \\ \cline{2-9}
LY ($m=5$) & proposed  & 3.497e-04 & 1.375e-05 & {\bf 7} & 39 & 8 & 3110.35 & 77.76 \\
& standard & 2.615e-05 & {\bf 3.067e-06} & {\bf 7} & 39 & 8 & 3712.36 & 92.81 \\ \hline
ex4\_1\_5 ($m=3)$ & proposed & {\bf 4.855e-06} & {\bf 1.588e-06} &   {\bf 15} & 34 & 4 & 246.38 & 7.04 \\
& standard & {\bf 8.450e-07} & {\bf 7.721e-06} &  {\bf 15} & 33 & 1 & 187.79 & 5.52 \\ \cline{2-9}
ex4\_1\_5 ($m=5$) & proposed &  {\bf 7.883e-07} & {\bf 4.096e-06} &  {\bf 7} & 26 & 0 & 394.90 & 14.63 \\
& standard & {\bf 5.657e-06} & {\bf 8.653e-07} &  {\bf 7} & 25 & 1 & 1456.68 & 56.03 \\ \hline
ex8\_1\_4 ($m=3$) & proposed & {\bf 5.602e-06} & {\bf 2.281e-06} &{\bf 15} & 46 & 6 & 367.15 & 7.81 \\
& standard & {\bf 7.500e-06} & {\bf 5.508e-06} &  {\bf 15} & 50 & 10 & 552.55 & 10.83  \\ \cline{2-9}
ex8\_1\_4 ($m=5$) & proposed & 8.105e-04 & 1.535e-04 &   {\bf 7} & 35 & 7 & 2393.85 & 66.50 \\
& standard & 1.346e-03 & 2.285e-05 &  {\bf 7} & 35 & 7 & 3237.19 & 89.92 \\ \hline
ex8\_1\_5 ($m=3)$ & proposed & {\bf 4.909e-06} & {\bf 2.280e-07} &  {\bf 15} & 31 & 1 & 151.47 & 4.73 \\
& standard & {\bf 3.230e-10} & {\bf 5.899e-06} &  {\bf 15} & 29 & 0 & 174.64 & 5.82 \\ \cline{2-9}
ex8\_1\_5 ($m=5$) & $\dag$proposed  & {\bf 4.751e-11} & {\bf 1.929e-08} & {\bf 7} & 26 & 1 & 570.25 & 21.12 \\
& standard & {\bf 3.946e-09} & {\bf 1.736e-06} &  {\bf 7} & 21 & 0 & 1336.28 & 60.74 \\ \hline
ex8\_1\_6 ($m=3)$ & proposed & {\bf 2.048e-08} & {\bf 7.997e-08} & {\bf 15} & 20 & 2 & 80.90 & 3.85 \\
& standard & {\bf 7.126e-09} & {\bf 5.140e-11} & {\bf 15} & 18 & 0 &  60.59 & 3.19 \\ \cline{2-9}
ex8\_1\_6 ($m=5$) & proposed  & {\bf 3.858e-08} & {\bf 7.980e-09} & 6 & 23 & 5 & 861.55 & 35.90  \\
& standard & {\bf 2.909e-08} & {\bf 1.500e-09} & {\bf 7} & 15 & 1 & 974.65 & 60.92 \\ \hline
\multicolumn{9}{l}{\footnotesize $\dag$ indicates mosek could not solve (\ref{eqn:convexqp}) properly in some iterations, but it did not affect the final results.}
\end{tabular}}
\end{center}
\end{table}

In general, the gradual polyhedral approximation of the closed convex cone $\KC=\CC^m$, which we are proposing, seems superior than considering a standard augmented Lagrangian method with fixed approximation $\KC^k$ from the beginning. This can be easily concluded from  
Figure~\ref{fig:cm}, which show the performance profile for the wall-clock time in Table~\ref{tab:main1}, when $m=3$
and $m=5$, respectively. Observe that some instances could not be solved at all by neither of methods, specially for the case $m=5$, showing that some nonlinear functions can be challenging for these type of algorithms.

In Table~\ref{tab:main1}, the last column gives the average wall-clock time per iteration,
which starts at iteration 0. These values are only reference values since each iteration
requires different amount of time, and more time is required when fail occurs 
due to increasing BFGS iterations to minimize (\ref{eqn:sal}). As we can observe, in general, ``standard" requires more time per iteration than ``proposed", because it needs to solve a larger problem with $J^0=\delta^m_{\max}$ from the first iterations, that is, larger problems to minimize in the projection (\ref{eqn:convexqp}). In the ``proposed" method, $J^0$ is set to $\delta^m_0$ and gradually increased.
There are few exceptions, ``B" for $m=5$, ``Ps" for $m=3$, and ``W" for $m=3$, even removing the
cases when ``proposed" has more failed iterations than ``standard". We believe that these are due to the averaging of the computational time since all these cases have
a higher number of iterations than other cases.


\begin{figure}[!htbp]
\begin{center}   
\includegraphics[scale=0.43]{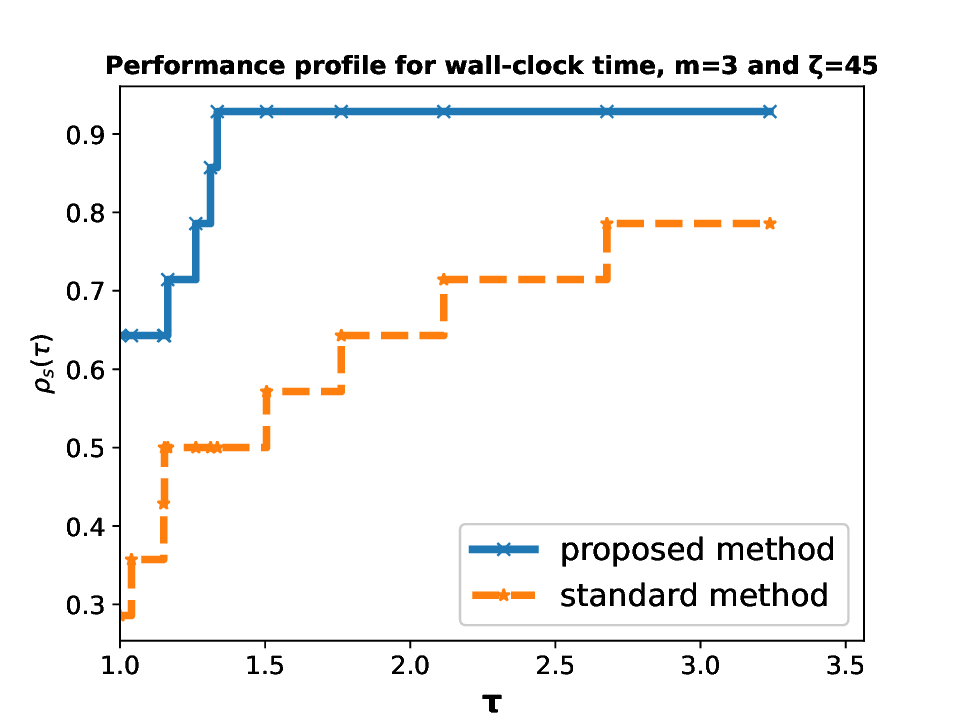}
\includegraphics[scale=0.43]{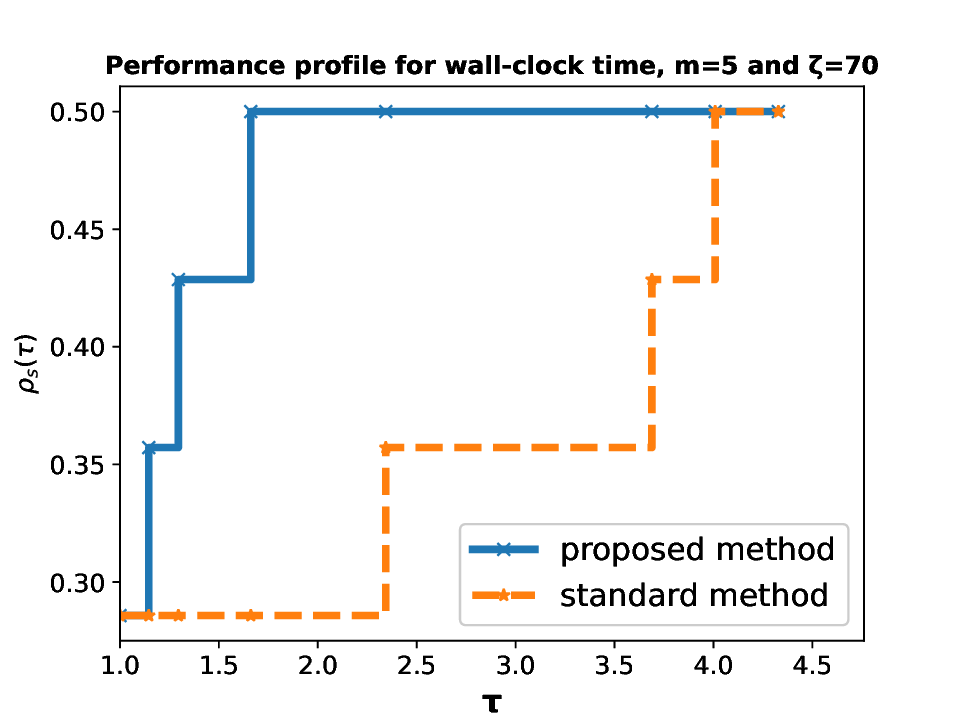}
\caption{Performance profile for $m=3$ (left) and $m=5$ (right) when solving 14 problems by the ``proposed" method and the ``standard" augmented Lagrangian method.}
\label{fig:cm}
\end{center}
\end{figure}



In conclusion, the ``proposed" method can save  computation time per iteration in the first iterations and a careful (and maybe conservative) update of the method 
seems to avoid the failure of satisfying $\|  \nabla L_{\rho_k,\widehat\mu^k}(x^k) \|\leq \varepsilon_{k}$ (\ref{eqn:approxfeasibility}) in later
iterations as it happens for the ``standard" method. 

\section{Concluding Remarks}
\label{sec:conclusion}

The general optimization problem of minimizing a nonlinear function subject to nonlinear conic constraints has received increasing attention in the recent years. General algorithms for dealing with such generality are still under development. The linear case is of particular interest considering that in many applications the nonlinearities of the problem may be concentrated to lie in the cone itself. However, when it is not clear how to project onto the cone, a practical implementation is usually out of hand. In contrast, most algorithms require that one is able to deal with the full cone in every iteration of the method. In this paper we proposed an augmented Lagrangian algorithm that considers the possibility of iteratively approximating the cone in each iteration, without hindering well established global convergence results. Numerical experiments are conducted with the copositive cone and its polyhedral outer approximation where we demonstrate in a small collection of problems that our strategy is superior than the alternative one of considering the full cone in each iteration.

\section*{Acknowledgements}

This work was partially supported by grants 2018/24293-0, 2020/04585-7, 2020/07421-5, and 2023/08706-1 from the S\~ao Paulo Research Foundation (FAPESP) and grants 302000/2022-4 and 407147/2023-3 from CNPq. The authors are grateful for insightful comments by the reviewers.


\begin{thebibliography}{99}
\bibitem{Andreani2020}
R. Andreani, W. G{\'o}mez, G. Haeser, and L.M. Mito,
\emph{On optimality conditions for nonlinear conic programming},
Mathematics of Operations Research 47 (2022), pp.~2160--2185.

\bibitem{amhaeser}
R. Andreani, G. Haeser, and J.M. Mart\'inez,
\emph{On sequencial optimality conditions for smooth constrained optimization},
Optimization 60 (2011), pp.~627--641.

\bibitem{ahv}
R. Andreani, G. Haeser, and D.S. Viana,
\emph{Optimality conditions and global convergence for nonlinear semidefinite programming},
Mathematical Programming 180 (2020), pp.~203--235.

\bibitem{amsvaiter}
R. Andreani, J.M. Mart\'inez, and B.F. Svaiter,
\emph{A new sequencial optimality condition for constrained 
optimization and algorithmic consequences},
SIAM Journal of Optimization 20 (2010), pp.~3533--3554.

\bibitem{duropen}
A. Berman, M. D{\"u}r, and N. Shaked-Monderer,
\emph{Open problems in the theory of completely positive and copositive matrices},
Electronic Journal of Linear Algebra 29 (2015), pp.~45--58.

\bibitem{birgin20}
E.G. Birgin, W. G\'omez, G. Haeser, L.M. Mito, and D.O. Santos,
\emph{An Augmented Lagrangian algorithm for nonlinear semidefinite programming applied to the covering problem},
Computational and Applied Mathematics 39 (2020), 21 pages.

\bibitem{duradapt}
S. Bundfuss and M. D{\"u}r, 
\emph{An adaptive linear approximation algorithm for copositive programs},
SIAM Journal on Optimization 20 (2009), pp.~30--53.

\bibitem{dursurvey}
M. D{\"u}r,   
\emph{Copositive programming -- a survey},
M. Diehl, F. Glineur, E. Jarlebring, and W. Michiels, eds. 
Springer, Berlin, Heidelberg, 2010, pp.~3--20.

\bibitem{dursurvey2}
M. D{\"u}r and F. Rendl,
\emph{Conic optimization: A survey with special focus on copositive optimization and binary
quadratic problems},
EURO Journal on Computational Optimization 9 (2021), 100021.

\bibitem{Haeser2022}
G. Haeser and D.O. Santos,
\emph{A simple proof of existence of Lagrange multipliers},
preprint (2024), arXiv:2402.05335v2, 9 Feb 2024.

\bibitem{Lourenco24}
B.F. Louren\c{c}o, V. Roshchina, and J. Saunderson,
\emph{Hyperbolicity cones are amenable},  
Mathematical Programming 204 (2024), pp.~753--764.

\bibitem{lue07}
D.G. Luenberger and Y. Ye,
\emph{Linear and Nonlinear Programming, 3rd ed.},
Springer, New York, 2007.

\bibitem{mariosvaiter}
J.M. Mart\'inez and B.F. Svaiter,
\emph{A practical optimality condition without constraint qualifications for nonlinear programming},
Journal of Optimization Theory and Applications 118 (2003), pp.~117--133.

\bibitem{minlplib}
\emph{MINLPLib (A Library of Mixed-Integer and Continuous Nonlinear Programming Instances)}.  
https://www.minlplib.org/. Accessed December 26, 2023.

\bibitem{MGH81}
J.J. Mor\'e, B.S. Garbow, and K.E. Hillstrom,
\emph{Testing unconstrained optimization software},
ACM Transactions on Mathematical Software 7 (1981), pp.~17--41.

\bibitem{shakedmonderer21}
N. Shaked-Monderer and A. Berman,
\emph{Copositive and Completely Positive Matrices},
World Scientific, Singapore, 2021.

\bibitem{Stingl2009}
M. Stingl, M. Ko\v{c}vara, and G. Leugering,
\emph{A sequential convex semidefinite programming algorithm with an application to multiple-load free material optimization},
SIAM Journal on Optimization 20 (2009), pp.~130--155.

\bibitem{alper}
E.A. Y{\i}ld{\i}r{\i}m, 
\emph{On the accuracy of uniform polyhedral approximations of the copositive cone},
Optimization Methods and Software 27 (2012), pp.~155--173.
\end{thebibliography}
\end{document}